\newtheorem{theorem}{Theorem}
\newtheorem{definition}{Definition}[section]
\newtheorem{proof}{Proof}[section]
\title{Amazing behavior and transition to chaos of some sequences using Collatz like problems and Quibic duffing
\thanks{\textit{\underline{Citation}}: 
\textbf{Authors. Title. Pages.... DOI:000000/11111.}} 
}
\author{
  Zeraoulia Rafik \\
  University of batna2.Algeria \\
  Departement of mathematics \\
  yabous ,khenchela\\
  \texttt{\{1\}r.zeraoulia@univ-batna2.dz} \\
}
\begin{document}
\maketitle

\begin{abstract}
In this paper we shall show amazing behavior of some discrete maps  using Collatze like problems and some advanced theories in analytic number theory and dynamical system,we have investigated the
driven cubic-quintic Duffing equation such that  we were
able to predict the number of limit cycles around the equilibrium and to develop a
theoretical approach to chaos suppression in damped driven systems using Collatze like problem sequences , some new results regarding behavior of that sequence are presented.
\end{abstract}

\keywords{ Collatze like problem \and irrationality  \and  sequences }

\section{Introduction}
The Collatz conjecture is one of the most famous unsolved problems in mathematics.\cite{13},\cite{7} The conjecture asks whether repeating two simple arithmetic operations will eventually transform every positive integer into 1. It concerns sequences of integers in which each term is obtained from the previous term as follows: if the previous term is even, the next term is one half of the previous term. If the previous term is odd, the next term is 3 times the previous term plus 1. The conjecture is that these sequences always reach 1, no matter which positive integer is chosen to start the sequence ,That  the conjecture has been tested via computers for numbers up to $\approx 5.48\cdot 10^{18}$, which is quite impressive, although we was thinking ahead of time it would have been tested by this time for incredibly large integers given we've had computers for  more than 50 years now.

Newly discovered fundamental theories (meta mathematics) of integer numbers may be used to formalise and formulate a new theoretical number system from which other formal analytical frameworks may be discovered (\cite{18}), primed and developed. The proposed number system (\cite{15}), as well as its most general framework which is based on the modelling results derived from an investigation of the Collatz conjecture (\cite{17}) (i.e., the 3x+1 problem), has emerged as an effective exploratory tool for visualising, mining and extracting new knowledge about quite a number of mathematical theorems and conjectures, including the Collatz conjecture  . an increased interest has been witnessed in studying the theory of discrete
dynamical systems including  Collatz , specifically of their associated difference equations. Sizable number of
works on the behavior and properties of pertaining solutions boundedness and
unboundedness of sequences which are derived From Collatz problems have been published in various areas of applied mathematics and physics. (\cite{1},\cite{3},\cite{4} ,\cite{14},\cite{5})

\begin{definition}
    The (borderline) Collatz-like problems: 
A map $f: \mathbb{N} \to \mathbb{N}$ will be called a Collatz-like map if 
\begin{equation}\label{1}
 0 \neq \lim_{n \to \infty} \left( \prod_{r=1}^n \frac{f(r)}{r} \right)^{1/n} \le 1 \end{equation}\ \ \ \   If the inequality  (\ref{1}) is an equality then the map $f$ will be called a borderline Collatz-like map.  
For each (borderline) Collatz-like map $f$, we have the  (borderline) Collatz-like problem asking whether its iterations diverges nowhere to infinity, i.e. $$\forall n>0, \ \exists m,r>0 \text{ with } f^{\circ (m+r)}(n) = f^{\circ m}(n).$$  
If the answer is yes, then let us call $f$ an acceptable (borderline) Collatz-like map.

This really  focus on a specific family of borderline Collatz-like problems:  
  
For any given $\alpha >0$, let us consider the following map \cite{10}
\begin{equation}\label{2}
f_{\alpha}: n \mapsto \left\{
    \begin{array}{ll}
         \left \lfloor{n\alpha} \right \rfloor & \text{ if } n \text{ even,}  \\
         \left \lfloor{n/\alpha} \right \rfloor & \text{ if } n \text{ odd.}
    \end{array}
\right.
\end{equation}

The map $f_{\alpha}$ in (\ref{2}) is borderline Collatz-like. Let $S$ be the set of $\alpha>0$ for which $f_{\alpha}$ is acceptable.

\end{definition}

One of the most important topic in analytic  number theory (\cite{12}) , which has attracted attention of
researchers in the field, is irrationality  measure of transcendental numbers like $\sqrt{2} ,\pi ,e ,\cdots$ ,(\cite{16},\cite{17}). In this paper we shall give amazing and surprising behavior of the following  discrete map which is deduced from  (\ref{2}) taking $\alpha=\sqrt{2}$ then ,let us consider \cite{22} :
$$f: n \mapsto \left\{
    \begin{array}{ll}
         \left \lfloor{n/\sqrt{2}} \right \rfloor & \text{ if } n \text{ even,}  \\
         \left \lfloor{n\sqrt{2}} \right \rfloor & \text{ if } n \text{ odd.}
    \end{array}
\right.$$

such that it involves $\sqrt{2},\pi$  and parity  and so on .

\section{Analysis and discussion}
Consider the following map:  

$$f: n \mapsto \left\{
    \begin{array}{ll}
         \left \lfloor{n/\sqrt{2}} \right \rfloor & \text{ if } n \text{ even,}  \\
         \left \lfloor{n\sqrt{2}} \right \rfloor & \text{ if } n \text{ odd.}
    \end{array}
\right.$$

Let $f^{\circ (r+1)}:=f \circ f^{\circ r}$, consider the orbit of $n=73$ for iterations of $f$, i.e. the sequence $f^{\circ r}(73)$:  $$73, 103, 145, 205, 289, 408, 288, 203, 287, 405, 572, 404, 285, 403, 569, 804, 568, 401, \dots$$  

It seems that this sequence diverges to infinity exponentially, and in particular, never reaches a cycle. Let illustrate that with the following picture of $(f^{\circ r}(73))^{1/r}$, with $200<r<20000$ .See Figure 1:

\begin{figure}[H]
    \centering
    \includegraphics[width=0.8\textwidth]{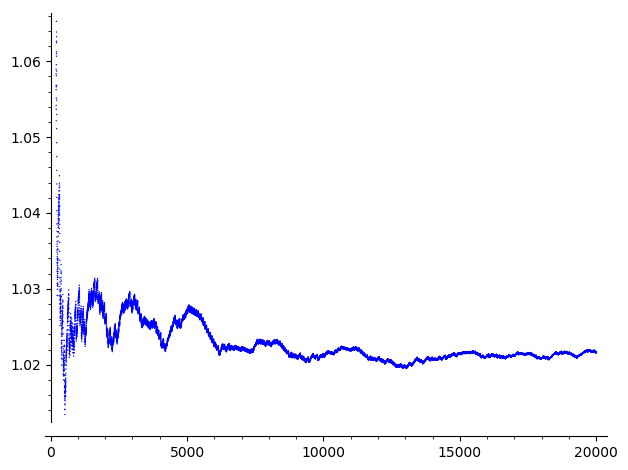}
    \caption{The iterated map for 73 times ($(f^{\circ r}(73))^{1/r}$, with $200<r<20000$)}
    \label{fig:my_label1}
\end{figure}
According to Figure1, it seems that $f^{\circ r}(73) \sim \delta^r$ with $\delta \sim 1.02$.

Now consider the probability (\cite{9}) of the $m$ first terms of the sequence $f^{\circ r}(73)$ to be even: $$p_{0}(m):= \frac{|\{ r<m \ | \  f^{\circ r}(73) \text{ is even}\}|}{m}.$$
Then $p_1(m):=1-p_0(m)$ is the probability of the $m$ first terms of $f^{\circ r}(73)$ to be odd.  

If we compute the values of $p_i(m)$ for $m=10^{\ell}$, $\ell=1,\dots, 5$, we get something  unexpected: 

It is unexpected because it seems that $p_0(m)$ does not converge to $1/2$, but to $\alpha \sim 0.465$.   
It matches with the above observation because $$ \delta \sim 1.02 \sim \sqrt{2}^{(0.535-0.465)} = \sqrt{2}^{(1-2 \times 0.465)} \sim \sqrt{2}^{(1-2\alpha)}.$$
\bigskip

$$
\begin{matrix}
\ell  & p_0(10^{\ell}) &p_1(10^{\ell}) \\
1 &0.2&0.8 \\
2 &0.45&0.55 \\
3 &0.467&0.533 \\
4 &0.4700&0.5300 \\
5 &0.46410&0.53590 \\
6 & 0.465476& 0.534524
\end{matrix}
$$

The line for $\ell = 6$ was computed Using Pari/GP and setting internal precision to $15000$ decimal digits we were able to get $p_1(10^6) = 0.534524$,The value of $f^{1e6}(73)$ is about $3.89439e10394$, (thus one needs such a big precision) and the $\log_{73}()$ of it is $5578.52$.
Now one can ask is it true that $f^{\circ r}(73)$ never reach a cycle, that $(f^{\circ r}(73))^{1/r}$ converges to $\delta \sim 1.02$, that $p_0(m)$ converges to $\alpha \sim 0.465$, and that $\delta^24^{\alpha} = 2$?  What are the exact values of $\delta$ and $\alpha$?  namely, better approximations?
The following Figure  provides the values of $p_0(m)$ for $100 < m < 20000$,See Figure2

\begin{figure}[H]
    \centering
    \includegraphics[width=0.7\textwidth]{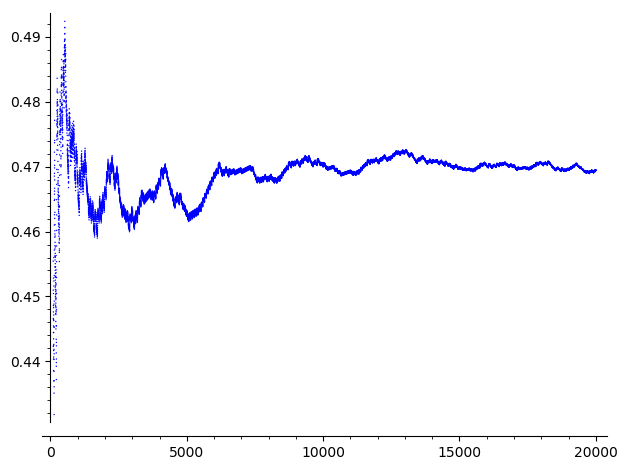}
    \caption{ values of $p_0(m)$ for $100 < m < 20000$}
    \label{fig:my_label3}
\end{figure}
Note that this phenomenon is not specific to $n=73$, but seems to happen as frequently as $n$ is big, and then, the analogous probability seems to converge to the same $\alpha$. If $n <100$, then it happens for $n=73$ only, but for $n<200$, it happens for $n=73, 103, 104, 105, 107, 141, 145, 146, 147, $ $ 148,  149, 151, 152, 153, 155, 161, 175, 199$; and for $10000 \le n < 11000$, to exactly $954$ ones.

Below is the picture shown by Figure3 as Figure2 but for $n=123456789$:
\begin{figure}[H]
    \centering
    \includegraphics[width=0.8\textwidth]{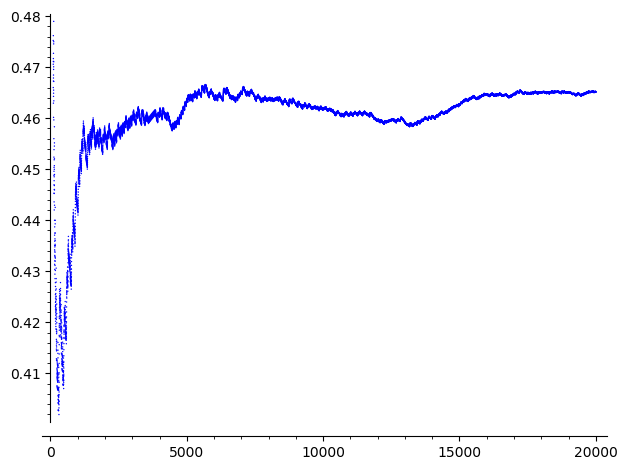}
    \caption{ values of $p_0(m)$ for $100 < m < 20000$, $n=123456789$}
    \label{fig:my_label4}
\end{figure}
One can ask  Is it true that the set of $n$ for which the above phenomenon happens has natural density one? Is it cofinite? When it happens, does it involves the same constant $\alpha$?

There are exactly $1535$ numbers $n<10000$ for which the above phenomenon does not  happen. The next Figure,namely ,Figure 4 displays for such $n$ the minimal $m$ (in blue) such that $f^{\circ m}(n) = f^{\circ (m+r)}(n)$ for some $r>0$, together with the miniman such $r$ (in red):
\begin{figure}[H]
    \centering
    \includegraphics[width=0.7\textwidth]{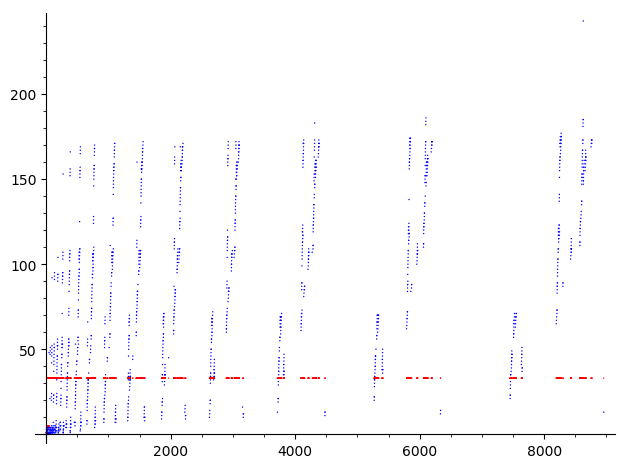}
    \caption{ The minimal $m$ for which $f^{\circ m}(n) = f^{\circ (m+r)}(n)$ for some $r>0$ }
    \label{fig:my_label5}
\end{figure}

In fact all these numbers (as first terms) reach the following cycle of length $33$:  

$$(15,21,29,41,57,80,56,39,55,77,108,76,53,74,52,36,25,35,49,69,97,137,193,272,192,135,190,134,94,66,46,32,22)$$   

except the following ones: $$7, 8, 9, 10, 12, 13, 14, 18, 19, 20, 26, 27, 28, 38, 40, 54,$$ which reach $(5,7,9,12,8)$, and that ones $1, 2, 3, 4, 6$ which reach $(1)$, and $f(0)=0$.

If the pattern continues like above up to infinity, they must have infinity many such $n$.  We may need to ask if  there infinitely many $n$ reaching a cycle? Do they all reach the above cycle of length $33$ (except the
few ones mentioned above)? What is the formula of these numbers $n$?  

Below in Figure5 is their counting function (it looks logarithmic):   
\begin{figure}[H]
    \centering
    \includegraphics[width=0.7\textwidth]{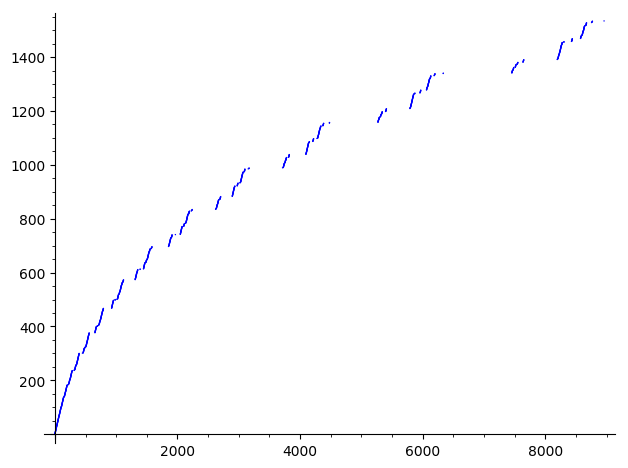}
    \caption{ counting function of the happned phenomena}
    \label{fig:my_label6}
\end{figure}

\section{Main result}
\begin{itemize}
\item 1)
A number $ m$  admits no predecessor iff the interval $[m\sqrt{2},(m+1)\sqrt{2}]$ admits no even number and the interval $[m/\sqrt{2},(m+1)/\sqrt{2}]$ admits no odd number. There are exactly $r_{\ell}$ such numbers $m<10^{\ell}$ with $\ell=1,2,3,4,5,6$ and $r_{\ell}=2,29,292,2928,29289,292893$ Strangely, for $\ell \le 6$ we observe that $r_{\ell-1} = ⌊ r_{\ell}/10⌋$
\item 2) Numbers without predecessors those of the form  $\lfloor n(2+\sqrt2)\rfloor$
Moreover , numbers with one predecessor are those of the form $\lfloor2k\sqrt2\rfloor$ and numbers with two predecessors those of the form $\lfloor(2k-1)\sqrt2\rfloor$

\item3)The homoclinic orbit of unperturbed system for Quibic duffing oscillators using Collatz sequences  separates the phase plane into two areas.
Inside the separatrix curve the orbits are around one of the centers, and
outside the separatrix curve the orbits surround both the centers and the
saddle point
\end{itemize}
\section{Analysis of the first result}
We  take pairs of $(m,n)$ (\cite{2}) for consecutive $m$ and their 1-step predecessors $n$ such that $f(n)=m$. The value $n=0$ indicates, that $m$ has no predecessor. I didn't reflect, that one $m$ can have two predecessors, but if $n/2$ is odd, then $n/2$ is a second predecessor (\cite{7}).(This makes the table more interesting, because all odd predecessors $n$ are overwritten by the even predecessors $2n$...     
   
Moreover, a nearly periodic structure occurs. We tried to resemble this by the arrangement of three or four columns of $(m,n)$ such that the first column contains all $m$ which have no predecessor. The basic pattern is not really periodic, but has super-patterns which again seem to be periodic but actually aren't. This pattern-superpattern-structure is also recursive. It reminds me of a similar structure when I looked at $\beta=\log_2(3)$ and  found a similar style of pattern-superpattern-supersuperpattern-... and is there related to the continued fraction of $\beta$.     
So We think we'll get no nice description for the cases $m$ which have no predecessor
\begin{figure}[H]
    \centering
    \includegraphics[width=0.7\textwidth]{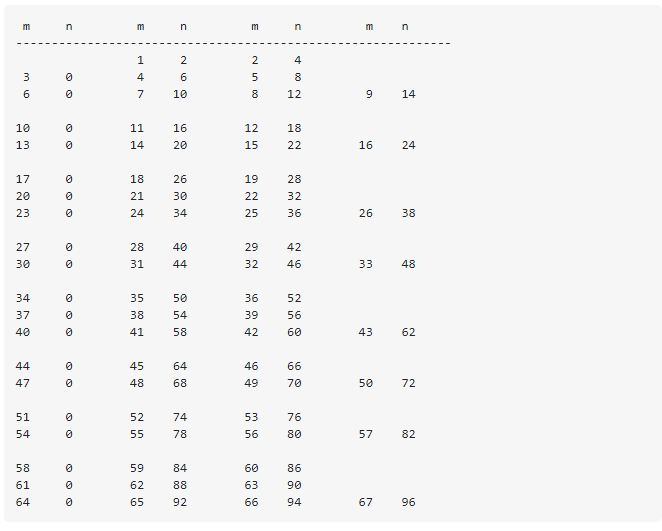}
    \caption{ Interpretation table for predecessors of the happened phenomena}
    \label{fig:my_label7}
\end{figure}
 Some more explanation on the idea of "recursive aperiodic pattern". 
If we list the values $m$ which have no predecessor, we get       
\begin{verbatim}
m_k:   3, 6,10,13, 17,20,23,27,30,... 
\end{verbatim}

Writing the differences (We have prepended a zero-value to the above list of $m_k$)         
\begin{verbatim}
,3,3,4  ,3,4  ,3,3,4  ,3,4  ,3,3,4  ,3,4  ,3,4  ,3,3,4 , ...      
\end{verbatim}

We note, that we have a pattern of two different words: `3,3,4` and `3,4` repeating, but aperiodical. Let's denote the longer one with the capital `A` and the shorter one with the small `a` (and `A` means a difference of 10 and `a` of 7).      
We get 
\begin{verbatim}
     Aa Aa Aaa 
     Aa Aaa 
     Aa Aa Aaa 
     Aa Aaa
     Aa Aa Aaa
     Aa Aaa
     Aa ...       
\end{verbatim}

Again we find only two kind of "words". Let's them shorten by `Aaa`=`B` and `Aa`=`b`. `B` means now a difference of 24, `b` of 17. 
Then we get   
\begin{verbatim}

       bbB bB
       bbB bB
       bbB bB bB
       bbB bB
       bbB bB bB
       bbB bB
       bbB bB
       bbB bB bB
       ... 
\end{verbatim}

Next obvious step gives     
\begin{verbatim}
       Cc Cc Ccc
       Cc Ccc
       Cc Cc Ccc
       Cc Ccc
       Cc Cc Ccc
       Cc Ccc
       ... 
\end{verbatim}

with `c` representing a difference of$ 17+24=41$ and  $C $of $17+17+24=58$.       
And so on.    
If We recall correctly, then with the mentioned case of working with $\beta = \log_2(3)$ the same style of recursive pattern reflected the convergents of the continued fractions of $\beta$.       
The first few differences here match the convergents of the continued fraction of $\sqrt2$ so far: 
\begin{verbatim}

                a    b    c                    short patterns
     -------------------------------------
    [1  1  3    7    17   41  99   239   577  ...  ]  convergents of contfrac(sqrt(2))
    [0  1  2    5    12   29  70   169   408  ...  ] 
     -------------------------------------...
              A/2   B/2   C/2                  long patterns              

\end{verbatim}

 The above can be explained by the following:
\begin{itemize}
\item1) a number of the form $\lfloor2k\sqrt2\rfloor$ has exactly one predecessor $4k$;
\item2)a number of the form $\lfloor(2k-1)\sqrt2\rfloor$ has exactly two predecessors $2k-1$ and $4k-2$;
\item3) a number has no predecessors iff it has form $\lfloor n(2+\sqrt2)\rfloor$.
\end{itemize}

 Using a back-step algorithm (recursive) it seems we've got the predecessing tree of $m=73$. If no bugs, then this tree would also be complete. *(But my routine may still be buggy, please check the results!)          

The back-steps go from top-right south-west (antidiagonal) downwards. When there are two possible predecessors, they occur in the same column, but on separate rows.     
If there is a predecessor without further predecessor, a short line (`---`) is printed.
\begin{verbatim}


                                        73   <--- start
                                    104   
                                148   
                            105 ---
  
                            210   
                        149   
                    212   
                300 ---
  
                        298   
                    211 ---
  
                    422   
                299   
            424   
        600 ---
  
                598   
            423 ---
  
            846 ---
        ---------------------------- tree seems to be complete (please check for errors!)
\end{verbatim}

We may give heuristic proof for our main results using Beatty theorem \cite{19} which it states that:
\begin{theorem}
    given an irrational number $r>1$  there exists  $s>1$ so  that the Beatty sequences $\bm{B_r}$ and $\bm{B_s}$ partition the set of positive integers: each positive integer belongs to exactly one of the two sequences
\end{theorem}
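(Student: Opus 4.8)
The plan is to prove the classical Beatty--Rayleigh theorem by an exact counting argument. First I would produce the companion $s$: given the irrational $r>1$, define $s := r/(r-1)$, equivalently the unique number satisfying $\tfrac1r + \tfrac1s = 1$. Since $r>1$ one checks $s>1$, and $s$ must be irrational, for if $s$ were rational then $\tfrac1r = 1 - \tfrac1s$ would be rational, forcing $r$ to be rational, contrary to hypothesis. Write $B_r = \{\lfloor kr\rfloor : k\ge 1\}$ and $B_s = \{\lfloor ks\rfloor : k\ge 1\}$; because $r,s>1$, both maps $k\mapsto \lfloor kr\rfloor$ and $k\mapsto\lfloor ks\rfloor$ are strictly increasing, so each sequence lists distinct integers.

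The heart of the argument is to count, for each integer $N\ge 1$, the elements of each sequence that do not exceed $N$. Set $A(N) := \#\{k\ge1 : \lfloor kr\rfloor \le N\}$. Because $r$ is irrational, $kr$ is never an integer, so $\lfloor kr\rfloor \le N \iff kr < N+1 \iff k < (N+1)/r$; since $(N+1)/r$ is itself irrational, the number of positive integers below it is exactly $\lfloor (N+1)/r\rfloor$, whence $A(N) = \lfloor (N+1)/r\rfloor$. The identical reasoning gives $B(N):=\#\{k\ge1 : \lfloor ks\rfloor\le N\} = \lfloor (N+1)/s\rfloor$.

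Next I would invoke the elementary floor identity: if $\alpha+\beta = m\in\mathbb{Z}$ with $\alpha,\beta\notin\mathbb{Z}$, then the fractional parts satisfy $\{\alpha\}+\{\beta\} = m - (\lfloor\alpha\rfloor+\lfloor\beta\rfloor) \in \mathbb{Z}$, while each lies in $(0,1)$, so the sum is a positive integer strictly below $2$, hence equals $1$, giving $\lfloor\alpha\rfloor+\lfloor\beta\rfloor = m-1$. Applying this with $\alpha=(N+1)/r$ and $\beta=(N+1)/s$ — which are irrational and sum to $(N+1)\left(\tfrac1r+\tfrac1s\right)=N+1$ — yields
\begin{equation}
A(N)+B(N) = (N+1)-1 = N \qquad (N\ge 1),
\end{equation}
and trivially $A(0)+B(0)=0$.

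Finally I would convert this counting identity into the partition statement by differencing. The quantity $A(N)-A(N-1)$ counts the indices $k$ with $\lfloor kr\rfloor = N$, and since $r>1$ forces consecutive values $\lfloor kr\rfloor$ to differ by at least $1$, no two indices share a value, so this difference lies in $\{0,1\}$ and is exactly the indicator that $N\in B_r$; likewise $B(N)-B(N-1)$ is the indicator that $N\in B_s$. Subtracting the identity at $N-1$ from that at $N$ shows these two indicators sum to $N-(N-1)=1$, so for every positive integer $N$ exactly one of them equals $1$: each $N$ lies in $B_r$ or in $B_s$, but never both. This is precisely the asserted partition. I expect the only genuinely delicate point to be the systematic use of irrationality to make the floor counts exact — the strict inequality $kr<N+1$ and the fact that $(N+1)/r\notin\mathbb{Z}$ — after which the floor-sum lemma and the differencing step are routine.
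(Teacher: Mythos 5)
Your proof is correct: the choice $s=r/(r-1)$, the exact counts $A(N)=\lfloor (N+1)/r\rfloor$, $B(N)=\lfloor (N+1)/s\rfloor$, the fractional-part identity giving $A(N)+B(N)=N$, and the differencing step together constitute the standard rigorous proof of the Beatty--Rayleigh theorem, and every step (including the use of irrationality to make the counts exact and the strict monotonicity of $k\mapsto\lfloor kr\rfloor$) is sound. However, it is genuinely different from what the paper does, because the paper never proves this theorem at all: the statement is quoted as a classical result (attributed to Rayleigh in the bibliography), and the text inside the paper's proof environment is not a proof of the theorem but an \emph{application} of it --- the author specializes to $r=\sqrt{2}$, $s=2+\sqrt{2}$ to conclude that $\{\lfloor n\sqrt{2}\rfloor\}$ and $\{\lfloor n(2+\sqrt{2})\rfloor\}$ partition $\mathbb{N}^*$, uses the identity $\lfloor n(\sqrt{2}+2)\rfloor=2n+\lfloor n\sqrt{2}\rfloor$ to characterize the integers without predecessors under the map $f$, and then develops a heuristic probabilistic computation for the limiting parity frequencies. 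So your argument supplies a self-contained justification of a black box the paper merely cites: what you gain is rigor and completeness at the level of the theorem itself; what the paper's passage provides instead is the bridge from the theorem to its specific claims about the dynamics of $f_{\sqrt{2}}$, which your proof does not address (and was not asked to). If your proof were inserted into the paper, it would complement rather than replace the existing text.
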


\begin{proof}
    We can say with Beatty theorem(Theorem 1) : $A=\{E(n(\sqrt{2}+2)) \text{ ; } n\in\mathbb N^*\}$ and $B=\{E(n \sqrt{2});n\in\mathbb N^*\}$ is a partition of $\mathbb N^*$
And we have $E(n(\sqrt{2}+2))=2n+E(n\sqrt{2})$
with $E$ is the function integer part ,and this proves the partial of result 2 (Form of numbers without predecessor).

For the First result , let assume that the probabilty for an integer $n$ to be odd is $\frac{1}{2}$, and that the probabilty for $f(n)$ to be odd when $n$ is even (resp. odd) is also $\frac{1}{2}$. We will observe that (surprisingly) it is no more $\frac{1}{2}$ for $f^{\circ r}(n)$ when $r \ge 2$ (in some sense, the probability does not commute with the composition of $f$ with itself).  
\begin{itemize}
\item1) if $n$ and $m=f(n)$ are even: note that $\frac{n}{\sqrt{2}} = m+\theta$ (with $0 < \theta < 1$) so that $m=\frac{n}{\sqrt{2}}- \theta$, then $$f^{\circ 2}(n) = f(m) = \left \lfloor{\frac{m}{\sqrt{2}}} \right \rfloor = \left \lfloor{\frac{\frac{n}{\sqrt{2}}- \theta}{\sqrt{2}}} \right \rfloor = \left \lfloor \frac{n}{2} - \frac{\theta}{\sqrt{2}}\right \rfloor$$ but $\frac{n}{2}$ is even with probability $\frac{1}{2}$, so in this case, $f^{\circ 2}(n)$ is odd with probability $\frac{1}{2}$.  

\item2) if $n$ is even and $m=f(n)$ is odd: $$f^{\circ 2}(n) = f(m) = \left \lfloor\sqrt{2}m \right \rfloor = \left \lfloor \sqrt{2}(\frac{n}{\sqrt{2}} - \theta) \right \rfloor = \left \lfloor n - \sqrt{2} \theta) \right \rfloor$$ but $n$ is even and the probability for $0<\sqrt{2} \theta<1$ is $\frac{\sqrt{2}}{2}$ (because $\theta$ is assumed statistically equidistributed \cite{9} on the open interval $(0,1)$), so $f^{\circ 2}(n)$ is odd with probability 
$\frac{\sqrt{2}}{2}$.    

\item3) if $n$ is odd and $m=f(n)$ is even:  
$$f^{\circ 2}(n) = f(m) = \left \lfloor{\frac{m}{\sqrt{2}}} \right \rfloor = \left \lfloor{\frac{\sqrt{2}n-\theta}{\sqrt{2}}} \right \rfloor = \left \lfloor n - \frac{\theta}{\sqrt{2}} \right \rfloor  $$
but $n$ is odd and $0 < \frac{\theta}{\sqrt{2}}<1$, so $f^{\circ 2}(n)$ is even.  

\item4) if $n$ is odd and $m=f(n)$ is odd:    
$$f^{\circ 2}(n) = f(m) = \left \lfloor \sqrt{2} m \right \rfloor = \left \lfloor \sqrt{2} (\sqrt{2}n-\theta) \right \rfloor = \left \lfloor 2n - \sqrt{2} \theta \right \rfloor $$
but $2n$ is even and the probability for $0<\sqrt{2} \theta<1$ is $\frac{\sqrt{2}}{2}$, so $f^{\circ 2}(n)$ is odd with probability $\frac{\sqrt{2}}{2}$.  
\end{itemize}

By combining these four cases together, we deduce that the probability for $f^{\circ 2}(n)$ to be odd is $$\frac{1}{2} \times \frac{1}{2} \times (\frac{1}{2} + \frac{\sqrt{2}}{2} + 0 + \frac{\sqrt{2}}{2}) = \frac{2\sqrt{2}+1}{8}$$  

By continuing in the same way, we get that the probability for $f^{\circ 3}(n)$ to be odd is:  
$$ \frac{1}{4} (\frac{1}{2}\frac{1}{2} + \frac{1}{2}\frac{\sqrt{2}}{2} + \frac{\sqrt{2}}{2}\frac{\sqrt{2}}{2} + 1\frac{1}{2} + \frac{\sqrt{2}}{2}\frac{\sqrt{2}}{2})  = \frac{\sqrt{2}+7}{16}$$   

For $2 \le r \le 24$, we computed the probability $p_r$ for $f^{\circ r}(n)$ to be odd (see Appendix). It seems (experimentally) that $p_r$ converges to a number $\simeq 0.532288725 \simeq \frac{8+3\sqrt{2}}{23}$ by Inverse Symbolic Calculator . This leads to the following question/conjecture:    
$$\lim_{r \to \infty}p_r = \frac{8+3\sqrt{2}}{23} \ \  ?$$  

If so, consider the number $\alpha$ , then $$\alpha = 1-\frac{8+3\sqrt{2}}{23} = \frac{15-3\sqrt{2}}{23} \simeq 0.467711,$$ which matches with above   computation (Analysis and discussion section). And next, we would have:  
$$ \delta = \frac{\sqrt{2}}{2^{\alpha}}= 2^{\frac{1}{2}-\alpha} = 2^{\frac{6\sqrt{2}-7}{46}} \simeq 1.022633$$
\end{proof}

\section{Homoclinic orbits and chaos in the un-perturbed system using Collatz problem}

Although no universally accepted mathematical definition of chaos
exists, a commonly used definition originally formulated by Robert L.
Devaney says that, to classify a dynamical system as chaotic, it must
have these properties:
\begin{itemize}
    \item 1) it must be sensitive to initial conditions
\item2) it must be topologically mixing
\item3) it must have dense periodic orbits
\end{itemize}

Firstly we may consider  the harmonically driven damped pendulum which is often used as a simple example of
a chaotic system, the equation is just 
\begin{equation}
\ddot{\phi}+\frac{1}{q}\dot{\phi}+\sin \phi =A\cos (\omega t)  \label{c_1}
\end{equation}%
As long as $A$ and $\omega $ are small it behaves like a driven harmonic
oscillator, and asymptotically settles into regular oscillations with a
fixed period. However, as $A$ (or $\omega $) are increased, with the rest of
parameters fixed, the system undergoes a cascade of period doubling
bifurcations leading to chaotic behavior, which then gives way to regular
oscillations again when it is increased further. For example, when $q=2$ and 
$\omega =2/3$ the first period doubling ("symmetry breaking") occurs at $%
A\approx 1.07$ and the first chaos at $A\approx 1.08$. These rigorous
results seem to be obtained by numerical simulations. One can be actually
interested in situations where chaos does not occur \cite{20}. Are there known
rigorous conditions on $A,\omega $ and $q$ that put the system below the
first period doubling? \ However this question does not belong to the aim of
this paper but it would be very interesting to conclude somethings about
chaotics behaviors of some dynamics and to discover new ways to supress
chaos in the cubic-Quintic Duffing Equation using some discrete iterated map  which it is the aim of our
research in this paper.
For the unperturbed system with fractional order displacement, when $%
\varepsilon =0$, the differential equation (\ref{c_1}) can be reformulated as For $A=0$, to%
\begin{equation}
\ddot{x}-ax+bx^{3}+cx^{5}=0.~  \label{r1}
\end{equation}%
Let 
\begin{equation}
\Delta :=b^{2}+4ac.  \label{r2}
\end{equation}%
Equilibrium points for $\Delta >0$ are : 
\begin{equation}
\left( x,\dot{x}\right) =\left( \pm \sqrt{\frac{-b+\sqrt{b^{2}+4ac}}{2c}}%
,0\right) \text{ : centers}  \label{r3}
\end{equation}%
Define%
\begin{equation}
x_{e}^{+}=\sqrt{\frac{-b+\sqrt{b^{2}+4ac}}{2c}}\text{ and }x_{e}^{-}=-\sqrt{%
\frac{-b+\sqrt{b^{2}+4ac}}{2c}}  \label{r3a}
\end{equation}%
The energy function for (\ref{r1}) \ is 
\begin{equation}
\frac{1}{2}\dot{x}(t)^{2}-\frac{1}{2}ax(t)^{2}+\frac{1}{4}bx(t)^{4}+\frac{1}{%
6}cx(t)^{6}=K  \label{r4}
\end{equation}%
where $K$ is the energy constant dependent on the initial amplitude $%
x(0)=x_{0}$ and initial velocity $x^{\prime }(0)=\dot{x}_{0}$ : 
\begin{equation}
K=\frac{1}{2}\dot{x}_{0}^{2}-\frac{1}{2}ax_{0}^{2}+\frac{1}{4}bx_{0}^{4}+%
\frac{1}{6}cx_{0}^{6}.  \label{r5}
\end{equation}%
Dependently on $K$, the level sets are different. For all of them it is
common that they form closed periodic orbits which surround the fixed points 
$(\mathrm{x},\dot{\mathrm{x}})=(x_{e}^{+},\ 0)$ or $(\mathrm{x},\dot{\mathrm{%
x}})=(x_{e}^{-},\ 0)$ or all the three fixed points $(x_{e}^{\pm },\ 0)$ and 
$(0,\ 0)$. The boundary between these two groups of orbits corresponds to $%
K=0$, when%
\begin{equation}
\dot{x}_{0}=\pm x_{0}\sqrt{\frac{1}{6}\left(
6a-3bx_{0}^{2}-2cx_{0}^{4}\right) }.  \label{r5a}
\end{equation}%
The level set 
\begin{equation}
\frac{\dot{x}^{2}}{2}-\frac{1}{2}ax^{2}+\frac{1}{4}bx^{4}+\frac{1}{6}cx^{6}=0%
\text{ }\quad   \label{r5b}
\end{equation}%
is composed of two homoclinic orbits 
\begin{equation}
\Gamma _{+}^{0}(\mathrm{t})\equiv (x_{+}^{0}(\mathrm{t}),\dot{x}_{+}^{0}(%
\mathrm{t})),\text{ }\quad   \label{r5b1}
\end{equation}%
\begin{equation}
\Gamma _{-}^{0}(\mathrm{t})\equiv (x_{-}^{0}(\mathrm{t}),\dot{x}_{-}^{0}(%
\mathrm{t})),\text{ }  \label{r5b2}
\end{equation}%
which connect the fixed hyperbolic saddle point $(0,\ 0)$ to itself and
contain the stable and unstable manifolds. The functions $x_{\pm }^{0}(%
\mathrm{t})$ may be evaluated using these two  formulas
\begin{equation}
M(t_{0})=\int_{-\infty }^{+\infty }\dot{x}^{0}(t)[\gamma \cos \omega
(t+t_{0})-\delta \dot{x}^{0}(t)]dt,\text{ }\quad   \label{M5}
\end{equation}%
with 
\begin{equation}
x^{0}(t)=\frac{A\text{sech}\left( \sqrt{k}t\right) }{\sqrt{1+\lambda \cdot 
\text{sech}^{2}\left( \sqrt{k}t\right) }}.  \label{D6}
\end{equation}%
. See
Figure 5
\begin{figure}[H]
    \centering
    \includegraphics[width=0.7\textwidth]{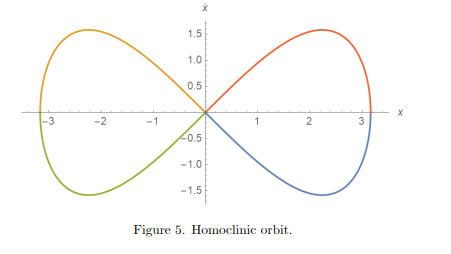}
    \label{fig:Alvaro5}
\end{figure}

\bigskip The homoclinic orbit \cite{21} separates the phase plane into two areas.
Inside the separatrix curve the orbits are around one of the centers, and
outside the separatrix curve the orbits surround both the centers and the
saddle point. Physically it means that for certain initial conditions the
oscillations are around one steady-state position \cite{23}, and for others around all
the steady- state solutions (two stable and an unstable).

Now ,in the second case which uses the iterated map (Collatze like problems sequences ) as the RHS  of equation (\ref{c_1}) ,that is perturbed system, We have noted transition to chaos(sensitivity to initial condition) ,Let us consider the following IVP (intial value problem):
\begin{equation}\label{CHAOS}
\ddot{x}-ax+bx^{3}+cx^{5}=f,\dot(x(0))=0,x(0)=0
\end{equation}%
such that :

$$f: n \mapsto \left\{
    \begin{array}{ll}
         \left \lfloor{n/\sqrt{2}} \right \rfloor & \text{ if } n \text{ even,}  \\
         \left \lfloor{n\sqrt{2}} \right \rfloor & \text{ if } n \text{ odd.}
    \end{array}
\right.$$
Let $f^{\circ (r+1)}:=f \circ f^{\circ r}$, consider the orbit of $n=73$ for iterations of $f$, i.e. the sequence $f^{\circ r}(73)$:  $$73, 103, 145, 205, 289, 408, 288, 203, 287, 405, 572, 404, 285, 403, 569, 804, 568, 401, \dots$$  (see 
It seems that this sequence diverges to infinity exponentially, and in particular, never reaches a cycle. Let illustrate that with the following picture of $(f^{\circ r}(73))^{1/r}$, with $200<r<20000$ , See Figure 7

\begin{figure}[H]
    \centering
    \includegraphics[width=0.7\textwidth]{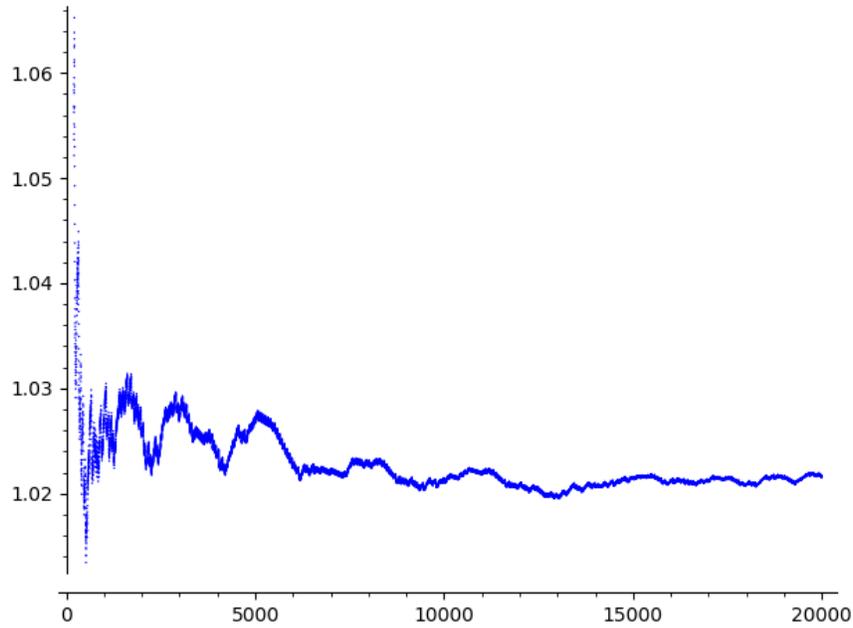}
    \caption{ expenential divergence for $200<r<20000$ }
    \label{fig:my_label8}
\end{figure}

And for initial sensitivity of IVP  corresponding to the system (\ref{CHAOS}) ,we have obtained the following Figure up to $r=20000$ (small perturbation),see Figure 8:

\begin{figure}[H]
    \centering
    \includegraphics[width=0.7\textwidth]{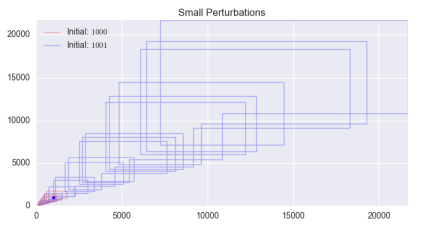}
    \caption{ Sensitivity to initial condition for small perturbation up to $r=20000$ for the Collatze sequence}
    \label{fig:my_label9}
\end{figure}
We may now show a phase space plot of the trajectory up to $r=20000$,namely, small perturbation ,See Figure9
\begin{figure}[H]
    \centering
    \includegraphics[width=0.7\textwidth]{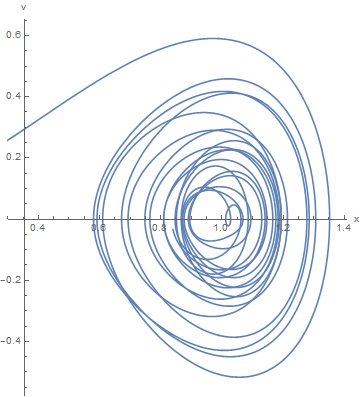}
    \caption{ transition to chaos $r=20000$ for (\ref{CHAOS}) using Collatze sequence for 73 iteration}
    \label{fig:my_labelp}
\end{figure}
This looks complicated, but in fact, most of the plot shows the initial period of time during which the motion is approaching its final behavior which is much simpler. The early behavior is called an "initial transient".

\section*{Acknowledgments}
This work is based on  work of  Sebastien Palcoux for his Arxiv research paper entitled "  unexpected behavior of some transcendental number like $\sqrt{2}$ " \cite{22}

\section{declaration statement}
The authors declare that they have no known competing financial interests or personal relationships that could have appeared to influence the work reported in this paper.

\section{Data Availability}
The authors were unable to find a valid data repository for the data used in this study

\section{Appendix}
Computation:
\begin{verbatim}
sage: for i in range(3,26):
....:     print(sq2(i))
....:
[1/4*sqrt(2) + 1/8, 0.478553390593274]
[1/16*sqrt(2) + 7/16, 0.525888347648318]
[3/32*sqrt(2) + 13/32, 0.538832521472478]
[15/64*sqrt(2) + 13/64, 0.534581303681194]
[5/128*sqrt(2) + 61/128, 0.531805217280199]
[39/256*sqrt(2) + 81/256, 0.531852847392776]
[93/512*sqrt(2) + 141/512, 0.532269260352925]
[51/1024*sqrt(2) + 473/1024, 0.532348527032254]
[377/2048*sqrt(2) + 557/2048, 0.532303961432938]
[551/4096*sqrt(2) + 1401/4096, 0.532283123258685]
[653/8192*sqrt(2) + 3437/8192, 0.532285334012406]
[3083/16384*sqrt(2) + 4361/16384, 0.532288843554459]
[3409/32768*sqrt(2) + 12621/32768, 0.532289246647030]
[7407/65536*sqrt(2) + 24409/65536, 0.532288816169701]
[22805/131072*sqrt(2) + 37517/131072, 0.532288667983386]
[24307/262144*sqrt(2) + 105161/262144, 0.532288700334941]
[72761/524288*sqrt(2) + 176173/524288, 0.532288728736551]
[159959/1048576*sqrt(2) + 331929/1048576, 0.532288729880941]
[202621/2097152*sqrt(2) + 829741/2097152, 0.532288725958633]
[639131/4194304*sqrt(2) + 1328713/4194304, 0.532288724978704]
[1114081/8388608*sqrt(2) + 2889613/8388608, 0.532288725350163]
[1825983/16777216*sqrt(2) + 6347993/16777216, 0.532288725570602]
[5183461/33554432*sqrt(2) + 10530125/33554432, 0.532288725561857]

\end{verbatim}

\textbf{Code:}
\begin{verbatim}
def sq2(n):
    c=0
    for i in range(2^n):
        l=list(Integer(i).digits(base=2,padto=n))
        if l[-1]==1:
            cc=1/4
            for j in range(n-2):
                ll=[l[j],l[j+1],l[j+2]]
                if ll==[0,0,0]:
                    cc*=1/2
                if ll==[0,0,1]:
                    cc*=1/2
                if ll==[0,1,0]:
                    cc*=(1-sqrt(2)/2)
                if ll==[0,1,1]:
                    cc*=sqrt(2)/2
                if ll==[1,0,0]:
                    cc*=1
                if ll==[1,0,1]:
                    cc=0
                    break
                if ll==[1,1,0]:
                    cc*=(1-sqrt(2)/2)
                if ll==[1,1,1]:
                    cc*=sqrt(2)/2
            c+=cc
    return [c.expand(),c.n()]
\end{verbatim}
\begin{equation}
M(t_{0})=\int_{-\infty }^{+\infty }\dot{x}^{0}(t)[\gamma \cos \omega
(t+t_{0})-\delta \dot{x}^{0}(t)]dt,\text{ }\quad   \label{c5}
\end{equation}%
Let 
\begin{equation}
x^{0}(t)=\frac{A\text{sech}\left( \sqrt{k}t\right) }{\sqrt{1+\lambda \cdot 
\text{sech}^{2}\left( \sqrt{k}t\right) }}.  \label{c6}
\end{equation}%


\end{document}